\newtheorem{thm}{ \bf Theorem}[section]
\newtheorem{lem}[thm]{ \bf Lemma}
\numberwithin{equation}{section}
\begin{document}

\baselineskip=17pt

\title[ Zero-Sum Games ]
{ Zero-Sum Stochastic Games with Partial Information and Average Payoff}
\thanks{This work is supported in part by SPM fellowship of
CSIR and in part by UGC Centre for Advanced Study.}

\author[Subhamay Saha]{ Subhamay Saha}
\address{Department of Mathematics\\
Indian Institute of Science\\
Bangalore 560 012, India.}
\email{subhamay@math.iisc.ernet.in}


\date{}

\begin{abstract}
We consider discrete time partially observable zero-sum stochastic
game with average payoff criterion. We study the game using an
equivalent completely observable game. We show that the game has a
value and also we come up with a pair of optimal strategies for
both the players.
\end{abstract}


\subjclass[2000]{Primary 91A15 ; Secondary 91A05, 91A25.}

\keywords{Stochastic games, partial observation, average payoff, saddle point strategies.}


\maketitle

\section{\textbf{Introduction}}
Stochastic games were introduced by Shapley in \cite{shapely}. Following this pioneering work there has been a lot of work on stochastic games.
For a survey on zero-sum games we refer to \cite{vrieze}. Most of the available literature in this category concerns stochastic games
with complete observation, i.e., at each stage, the state of the game is completely known to the players. Although there is
considerable amount of literature (see \cite{shreve}, \cite{borkar}, \cite{yushkevich} and the references therein) available on
 partially observable Markov decision processes (\textbf{POMDP}) of which stochastic games are a generalisation, the corresponding
 literature in partially observable stochastic games is rather sparse. In \cite{ghosh} the authors
 study zero-sum games for partially observable stochastic games under discounted payoff criterion.
 In this article we investigate the same problem with the average payoff criterion. In \cite{borkar} the authors
 study \textbf{POMDP} under the average cost criteria using the approach based on Athreya-Ney-Nummelin
 construction of pseudo-atoms (\cite{athreya}, \cite{nummelin}) as described in \cite{meyn}. In this article
 we extend those ideas to the zero-sum game case. Zero-sum stochastic games are generally studied by solving
 the corresponding dynamic programming or Shapely equations \cite{vrieze}. This approach has also been carried out for partially
  observable games in \cite{ghosh}. In this paper instead of solving the appropriate Shapely equations we solve two dynamic programming type inequalities,
   which in turn lead to the existence of a value and saddle point strategies. Also our article extends the idea of using the pseudo-atom approach in solving MDP, to the stochastic game setup.
Under certain Lyapunov assumption we use the pseudo-atom construction to carry out a coupling argument,
which gives us appropriate bound on the relative $\alpha-$discounted value function. This bound then enables us to make appropriate limiting arguments.

The rest of the paper is organized as follows. In Section 2 we describe the model. In Section 3 we use the vanishing discount approach to prove the existence of a value and a saddle-point equilibrium for the \textbf{POSG}. We conclude with a few remarks in Section 4.

\section{\textbf{Preliminaries and Model Description}}
Let $X, Y$ and $U, V$ be Polish spaces representing state, observation and action spaces for player 1 and player 2 respectively. We further assume that $U$ and $V$ are compact. For any Polish space $S$, we denote by $\mathcal{P}(S)$ the Polish space of probability measures on $S$ and by $\mathcal{B}(S)$ the Borel $\sigma$-field on $S$. Let $\{X_n\}$ be an $X$-valued partially observed controlled Markov chain with $Y$-valued observation process $\{Y_n\}$. Let $$(x,u,v) \in X\times U \times V \rightarrow p(dz,dy| x,u,v) \in \mathcal{P}(X \times Y)$$ be a transition kernel which is assumed to be continuous in its arguments. Let $\lambda$ denote a regular Borel radon measure on $X$. We assume the existence of a probability measure $\eta$ on $Y$ and a $\varphi \in C_b(X \times U \times V \times X \times Y)$, with $\varphi(\cdot)>0$ such that $$ p(dz,dy| x,u,v)= \varphi(x,u,v,z,y)\lambda(dz) \eta(dy)\,.$$
The chain is controlled by two players. The first player chooses his actions from $U$ and player 2 chooses his actions from $V$. Let $\{U_n\}$ be an $U$-valued control sequence of player 1 and $\{V_n\}$ be a $V$-valued control sequence of player 2. The transition probability function of the controlled chain $\{X_n\}$ together with the observation chain $\{Y_n\}$ is given by
\begin{align*}\mathbb{P}(X_{n+1}\in A, Y_{n+1}\in B|X_m,Y_m,U_m,V_m,m\leq n)=\int_A\int_B\varphi(X_n,U_n,V_n,z,y)\lambda(dz),\eta(dy)\end{align*}
for $A \in \mathcal{B}(X)$ and $B \in \mathcal{B}(Y)$. The partially observed stochastic game (\textbf{POSG}) under ergodic payoff criteria is the following:

\noindent (i) The initial distribution of the (unobservable) state process is $\psi$ which is known to both the players; $Y_0$ is deterministic, say $Y_0=y^*$ for some fixed element $y^*$ in $Y$.

\noindent (ii) At the $0$th epoch the players based on the knowledge that the initial distribution of the state process is $\psi$, independently choose actions $u_0\in U$ and $v_0 \in V$. Consequently, conditional on the event $X_0=x_0$ player 1 gets an (unobservable) payoff $c(x_0, u_0, v_0)$ from player 2. Here $$c:X \times U \times V \rightarrow \mathbb{R}_+$$ is assumed to be a bounded continuous function. The next state and observation pair $(X_1,Y_1)$ is generated according to the stochastic kernel  $p(dz,dy| x_0,u_0,v_0)$.

\noindent (iii) Now conditioned on the event $Y_1=y_1$ the players again choose their actions and so on. This process is repeated over an infinite time horizon.

\noindent (iv) Each player can recall at any time the observations and actions of the past.

We now construct a probability space on which all the random variables are defined. the canonical sample space is defined as$$\Omega := (X \times Y \times U \times V)^{\infty}\,.$$ A generic element is of the form $$\omega = (x_0,y_0, u_0, v_0, x_1,\cdots)\,\,,\,\, x_i\in X, y_i \in Y, u_i \in U, v_i \in V\,.$$ The history spaces are defined as $$H_0=X \times Y, \,\, H_{n+1}:= H_n \times U \times V \times X \times Y\,.$$ The state, observation, actions and history processes denoted by $\{X_n\}, \{Y_n\}, \{U_n\}, \{V_n\}, \{H_n\}$ respectively are defined by the projections
\begin{align*} &X_n(\omega)=x_n \,\,\, Y_n(\omega)=y_n \\
&U_n(\omega)=u_n \,\,\, V_n(\omega)=v_n \\
&H_n(\omega)=(x_0,y_0, u_0, v_0,\cdots, u_{n-1}, v_{n-1}, x_n, y_n)\,.
\end{align*}
The entire history up to time $n$ is not available to the players for decision making at time $n$. The players have to make their decisions based on the observed history or information vector $$i_n:=(y_0, u_0, v_0,\cdots, u_{n-1}, v_{n-1}, y_n)$$ and the initial distribution $\psi$. We define the information spaces as follows:
$$I_0:= Y, \,\, I_{n+1}:= I_n \times U \times V \times Y\,.$$ The information process is defined by $$I_n(\omega)=(y_0,u_0,v_0,\cdots,u_{n-1},v_{n-1},y_n)\,.$$ An admissible strategy for player 1 is a sequence $\pi^1=\{\pi_n^1\}$ of stochastic kernels on $U$ given $\mathcal{P}(X)\times I_n$. The set of admissible strategies for player 1 is denoted by $\Pi^1$. Similarly an admissible strategy for player 2 is a sequence $\pi^2=\{\pi_n^2\}$ of stochastic kernels on $V$ given $\mathcal{P}(X)\times I_n$. The set of admissible strategies for player 2 is denoted by $\Pi^2$. With $\psi$ in $\mathcal{P}(X)$ and a pair of admissible strategies $(\pi^1,\pi^2) \in \Pi^1 \times \Pi^2$ specified, there exists a unique probability measure $\mathbb{P}_{\psi}^{\pi^1,\pi^2}$ on $(\Omega, \mathcal{B}(\Omega))$ defined by
\begin{align}\nonumber&\mathbb{P}_{\psi}^{\pi^1,\pi^2}(dx_0,dy_0,du_0,dv_0,\cdots,du_{n-1},dv_{n-1}, dx_n,dy_n)\\
&=\psi(dx_0)\delta_{y^*}(dy_0)\pi_0^1(du_0|\psi,y_0)\pi_0^2(dv_0|\psi,y_0)p(dx_1,dy_1|x_0,u_0,v_0)\cdots\\\nonumber&\pi^1_{n-1}(du_{n-1}|\psi, y_0,u_0,v_0,\cdots,y_{n-1})\pi^2_{n-1}(du_{n-1}|\psi, y_0,u_0,v_0,\cdots,y_{n-1})p(dx_n,dy_n|x_{n-1},u_{n-1},v_{n-1})\,.
\end{align}
We now describe the payoff criterion. Given the initial distribution $\psi$ and a pair of strategies $(\pi^1,\pi^2) \in \Pi^1 \times \Pi^2$, the average payoff criterion is given by
\begin{eqnarray} V_{\pi^1,\pi^2}(\psi)= \liminf_{n \rightarrow \infty}\frac{1}{n}\mathbb{E}_{\psi}^{\pi^1,\pi^2}\sum_{k=0}^{n-1}c(X_k, U_k, V_k)
\end{eqnarray} where $\mathbb{E}_{\psi}^{\pi^1,\pi^2}$ is the expectation with respect to the probability measure $\mathbb{P}_{\psi}^{\pi^1,\pi^2}$. Player 1 wishes to maximise $V_{\pi^1,\pi^2}(\psi)$ over all his admissible strategies and player 2 wishes to minimise the same over all his admissible strategies. A strategy ${\pi^*}^1$ is said to be optimal for player 1 if $$V_{{\pi^*}^1,\pi^2}(\psi) \geq \inf_{\Pi^2}\sup_{\Pi^1}V_{\pi^1,\pi^2}(\psi)$$ for any $\pi^2 \in \Pi^2$. Similarly a strategy ${\pi^*}^2$ is said to be optimal for player 2 if
$$V_{\pi^1,{\pi^*}^2}(\psi) \leq \sup_{\Pi^1}\inf_{\Pi^2}V_{\pi^1,\pi^2}(\psi)$$ for any $\pi^1 \in \Pi^1$. The game is said to have a value if
$$\inf_{\Pi^2}\sup_{\Pi^1}V_{\pi^1,\pi^2}(\psi)= \sup_{\Pi^1}\inf_{\Pi^2}V_{\pi^1,\pi^2}(\psi)\,.$$ If a pair of optimal strategies $({\pi^*}^1, {\pi^*}^2)$ exists for both the players then the pair $({\pi^*}^1, {\pi^*}^2)$ is called a saddle point equilibrium.
Now since the original state process is unobservable we define another state variable which is observable to the players. In order to achieve that, we have by conditioning
\begin{eqnarray}\label{payoff}V_{\pi^1,\pi^2}(\psi)= \liminf_{n\rightarrow \infty}\frac{1}{n}\sum_{m=0}^{n-1}\mathbb{E}_{\psi}^{\pi^1,\pi^2}[\tilde{c}(\Psi_m,U_m,V_m)]\,,\end{eqnarray}where $\{\Psi_n\}$ is the regular conditional law of $X_n$ given $I_n$, satisfying the recursion
\begin{align}\label{filter}\Psi_{n+1}(dz)=\frac{\int_X \Psi_n(dx)\varphi(x,U_n,V_n,z,Y_{n+1})\lambda(dz)}{\int_X\int_X \Psi_n(dx)\varphi(x,U_n,V_n,z,Y_{n+1})\lambda(dz)}\,, \,\,\,n\geq 0
\end{align}
and $$\tilde{c}(\psi,u,v)=\int_Xc(x,u,v)\psi(dx)\,.$$ Equation \eqref{filter} is known as the filtering equation. Note that since $Y_0$ is deterministic, $\Psi_0=$ the law of $X_0$. This allows us to consider an equivalent stochastic game with $\mathcal{P}(X)$-valued state process $\{\Psi_n\}$ with its evolution given by \eqref{filter}, under the same set of admissible strategies and with the payoff criterion given by \eqref{payoff}. This is a completely observable stochastic game (\textbf{COSG}) because $\Psi_n$ is known to both the players via the information upto time $n$. Thus we can solve the original \textbf{POSG} by solving this equivalent \textbf{COSG}.
Now in order to show that the \textbf{POSG} model under the average payoff criterion has a saddle point equilibrium and a value we impose the following Lyapunov type assumptions on our model.

\noindent\textbf{(A1)} There exists inf-compact functions $h$ and $\mathcal{V} \in C(X)$ satisfying $h\geq 1$, such that under any pair of admissible strategies and for any initial distribution \begin{eqnarray}\label{assump}\mathbb{E}(\mathcal{V}(X_{n+1})|\mathcal{F}_n)-\mathcal{V}(X_n)\leq -h(X_n)+cI_K
(X_n)
\end{eqnarray} where $K$ is some compact set with $\lambda(K)>0$ and $\mathcal{F}_n=\sigma(X_k,Y_k,U_k,V_k,k\leq n)$. We have dropped the super- and subscripts on $\mathbb{E}$ for notational convenience. Let $$\tau_{K}=\min\{n\geq 0: X_n \in K\}\,.$$ Then it is well known that (\cite{meyn}) $$\mathbb{E}[\tau_{K}|X_0=x]=O(\mathcal{V}(x))\,.$$
Define $$\mathcal{P}_0(X)=\{\mu \in \mathcal{P}(X): \int \mathcal{V}d\mu < \infty\}\,.$$ Now using \eqref{assump} we obtain
\begin{align*}&\mathbb{E}[\mathcal{V}(X_{n+1})]=\mathbb{E}[\int_X\mathcal{V}(x)d\Psi_{n+1}(dx)]\\&\leq \mathbb{E}[\mathcal{V}(X_n)]+\, \mbox{constant}\\
&=\mathbb{E}[\int_X\mathcal{V}(x)d\Psi_n(dx)]+\, \mbox{constant}\,.
\end{align*}
Hence it follows that if $\Psi_0 \in \mathcal{P}_0(X)$ then $\Psi_n \in \mathcal{P}_0(X),\,\forall n \geq 1$. We assume that $\Psi_0\in \mathcal{P}_0(X)$ and hence $\{\Psi_n\}$ can be viewed as a $\mathcal{P}_0(X)$-valued process. We further assume that

\noindent \textbf{(A2)} Under all admissible strategies  and for any initial distribution $$\lim_{n \rightarrow \infty}\frac{\mathbb{E}[\mathcal{V}(X_n)]}{n}=0\,.$$

\section{Saddle Point Strategies and Value}
We follow the vanishing discount approach to solve the average cost problem. Let $\alpha \in (0,1)$. Then consider the following discounted payoff \textbf{POSG}:
$$V_{\alpha}^{\pi^1,\pi^2}(\psi)=\mathbb{E}_{\psi}^{\pi^1, \pi^2}[\sum_{k=0}^{\infty}\alpha^k c(X_k,U_k,V_k)]$$ Player 1 tries to maximise the above quantity over all his admissible strategies and player 2 tries to minimise the same quantity over his admissible strategies. The definitions for the value of the game and for the optimal strategies can be given analogous to that of average payoff criterion. The following theorem can be proved using the equivalence with the \textbf{COSG} as discussed above and  standard arguments as in \cite{ghosh}:
\begin{thm} \label{discounted}The discounted payoff \textbf{POSG} has a value and the value function $V_{\alpha}(.)$ is the unique bounded solution of the following pair of Shapley equations:
\begin{align}\label{spe}\nonumber V_{\alpha}(\psi)=&\min_{\nu \in \mathcal{P}(V)}\max_{\mu \in \mathcal{P}(U)}\biggl[\bar{\tilde{c}}(\psi,\mu,\nu)
+ \alpha \int_{\mathcal{P}_0(X)}V_{\alpha}(\psi^{\prime})\phi(d\psi^{\prime}|\psi,\mu,\nu)\biggr]\\
=&\max_{\mu \in \mathcal{P}(U)}\min_{\nu \in \mathcal{P}(V)}\biggl[\bar{\tilde{c}}(\psi,\mu,\nu)
+ \alpha \int_{\mathcal{P}_0(X)}V_{\alpha}(\psi^{\prime})\phi(d\psi^{\prime}|\pi,\mu,\nu)\biggr]
\end{align}where $$\phi(d\psi^{\prime}|\psi,\mu,\nu)= \int_{U}\int_V \tilde{\phi}(d\psi^{\prime}|\psi,u,v)\mu(du)\nu(dv)$$ with  $\tilde{\phi}(d\psi^{\prime}|\psi,u,v)$ being the controlled transition kernel of the Markov chain $\{\Psi_n\}$, and
$$\bar{\tilde{c}}(\psi,\mu,\nu)= \int_U\int_V \tilde{c}(\psi,u,v)\mu(du)\nu(dv)\,.$$ Moreover let $u^*:\mathcal{P}_0(X)\rightarrow \mathcal{P}(U)$ be a measurable function such that $u^*(.)$ is an outer maximiser of \eqref{spe} then the strategy $\{{\pi^*_n}^1\}$ given by $${\pi^*_n}^1(\cdot|i_n)=u^*(\psi_n)(\cdot)\,.$$ is optimal for player 1. Further let $v^*:\mathcal{P}_0(X)\rightarrow \mathcal{P}(V)$ be a measurable function such that $v^*(.)$ is an outer minimiser of \eqref{spe} then $\{{\pi^*_n}^2\}$ given by $${\pi^*_n}^2(\cdot|i_n)=v^*(\psi_n)(\cdot)\,.$$ is an optimal strategy for player 2.
\end{thm}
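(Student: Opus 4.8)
The plan is to exploit the equivalence with the COSG established above and to solve the discounted game by a fixed-point analysis of the associated Shapley operator. Recall that the reduced game has state space $\mathcal{P}_0(X)$ (Polish), action spaces $\mathcal{P}(U)$ and $\mathcal{P}(V)$ for the two players (which are compact and convex, since $U$ and $V$ are compact Polish spaces), bounded continuous one-stage cost $\bar{\tilde{c}}$, and controlled transition kernel $\tilde{\phi}$ produced by the filtering recursion \eqref{filter}. First I would record the regularity of the data of this game: $\bar{\tilde{c}}(\psi,\mu,\nu)$ is bounded, jointly continuous, and affine separately in $\mu$ and in $\nu$ (being an integral against the product measure $\mu\otimes\nu$), while $(\psi,u,v)\mapsto\tilde{\phi}(d\psi'|\psi,u,v)$ is weakly continuous. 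The continuity of $\tilde{\phi}$ is the one genuinely technical input; it amounts to continuity of the nonlinear filter defined by \eqref{filter}, and I would establish it exactly as in \cite{ghosh}, using the assumed continuity and strict positivity of $\varphi$ together with the dominating-measure decomposition $p(dz,dy|x,u,v)=\varphi\,\lambda(dz)\,\eta(dy)$.

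With the data in hand, define the Shapley operator $T$ on $C_b(\mathcal{P}_0(X))$, the Banach space of bounded continuous functions under the sup-norm, by
$$
(TW)(\psi)=\min_{\nu\in\mathcal{P}(V)}\max_{\mu\in\mathcal{P}(U)}\Bigl[\bar{\tilde{c}}(\psi,\mu,\nu)+\alpha\int_{\mathcal{P}_0(X)}W(\psi')\phi(d\psi'|\psi,\mu,\nu)\Bigr].
$$
The first key step is to check that for each fixed $\psi$ the inner static game has a value, i.e. that the min-max equals the max-min. For fixed $\psi$ and $W\in C_b(\mathcal{P}_0(X))$ the bracketed integrand is continuous and affine (hence both concave and convex) in each of $\mu$ and $\nu$ separately, and $\mathcal{P}(U),\mathcal{P}(V)$ are compact convex; thus Fan's minimax theorem gives equality of the upper and lower values. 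This is precisely what justifies the two expressions for $V_\alpha$ in \eqref{spe} once $V_\alpha$ is identified.

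Next I would verify that $T$ maps $C_b(\mathcal{P}_0(X))$ into itself and is a contraction of modulus $\alpha$. Joint continuity of the integrand in $(\psi,\mu,\nu)$ together with compactness of the action spaces yields, via Berge's maximum theorem, continuity of $\psi\mapsto(TW)(\psi)$; boundedness of $\bar{\tilde{c}}$ gives the bound $\|TW\|_\infty\le\|c\|_\infty+\alpha\|W\|_\infty$. Since the $W$-dependence enters only through the discounted integral and $|\min\max F_1-\min\max F_2|\le\sup|F_1-F_2|$, one obtains $\|TW_1-TW_2\|_\infty\le\alpha\|W_1-W_2\|_\infty$. The Banach fixed-point theorem then produces a unique $V_\alpha\in C_b(\mathcal{P}_0(X))$ solving $V_\alpha=TV_\alpha$, which is exactly the pair of identities \eqref{spe}; uniqueness within the bounded class is immediate from the contraction, and one reads off $\|V_\alpha\|_\infty\le\|c\|_\infty/(1-\alpha)$.

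Finally I would extract stationary optimal strategies and verify their optimality. A measurable selection argument (Kuratowski--Ryll-Nardzewski, since the set-valued maps of outer maximisers and outer minimisers have nonempty compact values and are measurable in $\psi$) yields measurable $u^*(\cdot)$ and $v^*(\cdot)$ attaining the outer max and outer min in \eqref{spe}; the induced stationary strategies are the claimed ${\pi^*}^1,{\pi^*}^2$. Substituting $v^*$ into the Shapley equation and iterating, the discounted tail $\alpha^n\mathbb{E}[V_\alpha(\Psi_n)]\to 0$ (automatic as $\alpha<1$ and $V_\alpha$ is bounded), giving $V_{\alpha}^{\pi^1,{\pi^*}^2}(\psi)\le V_\alpha(\psi)$ for every $\pi^1$; symmetrically $u^*$ yields $V_{\alpha}^{{\pi^*}^1,\pi^2}(\psi)\ge V_\alpha(\psi)$ for every $\pi^2$. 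These two inequalities identify $V_\alpha$ as the value and $({\pi^*}^1,{\pi^*}^2)$ as a saddle point. I expect the main obstacle to be the continuity (and the correct choice of topology) for the filtering kernel $\tilde{\phi}$ on $\mathcal{P}_0(X)$; once that is secured as in \cite{ghosh}, the minimax, contraction, selection, and verification steps are routine.
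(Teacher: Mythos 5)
Your proposal is correct and is exactly the argument the paper has in mind: the paper offers no written proof of this theorem, stating only that it "can be proved using the equivalence with the \textbf{COSG} \dots and standard arguments as in \cite{ghosh}," and your reduction to the completely observable game followed by the Shapley-operator contraction, Fan's minimax theorem for the inner static game, measurable selection, and the verification step is precisely that standard route. You also correctly flag the weak continuity of the filtering kernel $\tilde{\phi}$ as the one nontrivial technical input to be imported from \cite{ghosh}.
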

 Now for the vanishing discount approach we need to compare $V_{\alpha}(.)$ for two different values of its argument. For that we construct on a common probability space two $X$-valued controlled Markov chains as above, controlled by the same pair of strategies but with different initial distributions $\hat{\psi}$ and $\tilde{\psi}$. This is done by a modification of the construction in the previous section. Let $\{\pi^1_n\}$ be an admissible strategy for player 1 and let $\{\pi^2_n\}$ be an admissible strategy for player 2. Define
 $$\bar{\Omega}=(X \times X \times Y \times Y \times U \times V)^{\infty}$$ with $\bar{\mathcal{F}}$ being the corresponding product Borel $\sigma$-algebra. Define $\bar{\mathbb{P}}^{\pi^1,\pi^2}_{\hat{\psi},\tilde{\psi}}$, a probability measure on $(\bar{\Omega}, \bar{\mathcal{F}})$ by
{\small\begin{align*}&\bar{\mathbb{P}}^{\pi^1,\pi^2}_{\hat{\psi},\tilde{\psi}}(d\hat{x}_0,d\tilde{x}_0,d\hat{y}_0,d\tilde{y}_0,du_0,dv_0,
d\hat{x}_1,d\tilde{x}_1,d\hat{y}_1,d\tilde{y}_1,du_1,dv_1,\cdots,du_{n-1},dv_{n-1},d\hat{x}_n,d\tilde{x}_n,d\hat{y}_n,d\tilde{y}_n)
\\&=\hat{\psi}(d\hat{x}_0)\tilde{\psi}(d\tilde{x}_0)\delta_{y^*}(d\hat{y}_0)\delta_{y^*}(d\tilde{y}_0)\pi^1_0(du_0|\hat{\psi},\hat{y}_0)
\pi^2_0(dv_0|\tilde{\psi},\tilde{y}_0)p(d\hat{x}_1,d\hat{y}_1|\hat{x}_0,u_0,v_0)
p(d\tilde{x}_1,d\tilde{y}_1|\tilde{x}_0,u_0,v_0)\\&\pi^1_1(du_1|\hat{\psi},\hat{y}_0,u_0,v_0,\hat{y}_1)
\pi^2_1(dv_1|\tilde{\psi},\tilde{y}_0,u_0,v_0,\tilde{y}_1)
\cdots\pi^1_{n-1}(du_{n-1}|\hat{\psi},\hat{y}_0,u_0,v_0,\hat{y}_1,\cdots, u_{n-2},v_{n-2},\hat{y}_{n-1})\\&\pi^2_{n-1}(dv_{n-1}|\tilde{\psi},\tilde{y}_0,u_0,v_0,\tilde{y}_1,\cdots, u_{n-2},v_{n-2},\tilde{y}_{n-1})p(d\hat{x}_n,d\hat{y}_n|\hat{x}_{n-1},u_{n-1},v_{n-1})p(d\tilde{x}_n,d\tilde{y}_n|\tilde{x}_{n-1},u_{n-1},v_{n-1})\,.
\end{align*}}
On $(\bar{\Omega},\bar{\mathcal{F}},\bar{\mathbb{P}})$, define the processes $\{\hat{X}_n\}$, $\{\tilde{X}_n\}$, $\{\hat{Y}_n\}$, $\{\tilde{Y}_n\}$, $\{U_n\}$, $\{V_n\}$ canonically. Then the Markov chains $\{\hat{X}_n\}, \{\tilde{X}_n\}$ on $(\bar{\Omega},\bar{\mathcal{F}},\bar{\mathbb{P}})$ form the desired pair. For notational simplicity we omit the superscripts and subscripts on $\bar{\mathbb{P}}$. We denote by $\bar{X_n}=(\hat{X}_n,\tilde{X}_n)$ and the associated observation pair by $\bar{Y}_n=(\hat{Y}_n,\tilde{Y}_n)$. Then $\{\bar{X}_n\}$ is an $X^{2}$ valued Markov chain. Let the controlled transition kernel be denoted by $$\bar{p}(d\bar{z},d\bar{y}|\bar{x},u,v) \in \mathcal{P}(X^2\times Y^2)$$ for $\bar{x}=(x_1,x_2) \in X^2$. Define $G=K^2$ and define $\Theta \in \mathcal{P}(X^2)$ by $$\Theta(A)=\frac{\lambda \times \lambda(A \cap G)}{\lambda(K)^2}$$ for any Borel set $A$ of $X^2$. Then if follows from our assumptions that
$$\bar{p}(A \times Y^2|\bar{x},u,v)\geq \delta I_G(\bar{x})\Theta(A)$$ where $\delta = \frac{1}{2}(\inf_{x \in K, u \in U, v \in V, z \in K}\int_Y\varphi(x,u,v,z,y)\eta(dy)\lambda(K))^2$. This is the minorization condition of \cite{meyn} in the present context which enables us to carry out the Athreya-Ney-Nummelin construction of pseudo-atom \cite{meyn}.

Let $H=X^2$ and $H^*=X^2\times \{0,1\}$. Endow $H^*$ with its Borel $\sigma-$field. For any measure $\mu$ on $H$, define a measure $\mu^*$ on $H^*$ as follows: For Borel $A \subset H$, let $A_0=A \times \{0\}$ and $A_1=A \times \{1\}$. Then
\begin{align*}\mu^*(A_0)&=(1-\delta)\mu(A\cap K^2)+\mu(A\cap(K^2)^c)\\
\mu^*(A_1)&=\delta\mu(A \cap K^2)\,.
\end{align*}For a measure $\mu$ on $H \times Y^2$, we define the measure $\mu^*$ on $H^*\times Y^2$ by
\begin{align*}\mu^*(A_0\times D)&=(1-\delta)\mu((A\cap K^2)\times D)+\mu((A\cap(K^2)^c)\times D)\\
\mu^*(A_1\times D)&=\delta\mu((A \cap K^2)\times D)\,,
\end{align*}for $D \subset Y^2$ Borel. On a suitable probability space $(\Omega^*,\mathcal{F}^*,\mathbb{P}^*)$, define an $H^*$-valued controlled Markov chain $\{X_n^*,i^*_n\}$ (where $X_n^*=(\hat{X}_n^*,\tilde{X}_n^*$)) with $U$- valued control process $\{U^*_n\}$ and $V$- valued control process $\{V_n^*\}$ and $Y^2$-valued observation process $\{Y^*_n\}$, such that:\\
(i) The controlled transition kernel of $\{X_n^*,i_n^*,Y_n^*\}$ is given by: for $x=(x_0,i_0)\in H^*$,
\begin{align*}q(d\bar{x}, d\bar{y}|x,u,v)&=\bar{p}^*(d\bar{x}, d\bar{y}|x_0,u,v),\,\,\,\,\,x\in H_0-K^2\times \{0\}
\\&=\frac{1}{1-\delta}(\bar{p}^*(d\bar{x}, d\bar{y}|x_0,u,v)-\delta\Theta^*(d\bar{x})\eta^2(d\bar{y})),\,\,\,\,x \in K^2\times \{0\}
\\&=\Theta^*(d\bar{x})\eta^2(d\bar{y}),\,\,\,\,\,\,x\in H_1\,,
\end{align*}
(ii) \begin{align*}\mathbb{P}^*((X_0^*,i_0^*)\in A_0,Y^*_0\in A^{\prime}, U_0^* \in \Delta, V_0^* \in \Gamma)= &(1-\delta)\bar{\mathbb{P}}(\bar{X}_0\in A \cap K^2, \bar{Y}_0 \in A^{\prime}, U_0 \in \Delta, V_0 \in \Gamma)\\&+\bar{\mathbb{P}}(\bar{X}_0\in A\cap (K^2)^c, \bar{Y}_0\in A^{\prime}, U_0 \in \Delta, V_0 \in \Gamma)
\end{align*}
\begin{align*}\mathbb{P}^*((X_0^*,i_0^*)\in A_1,Y^*_0\in A^{\prime}, U_0^* \in \Delta, V_0^* \in \Gamma)= \delta\bar{\mathbb{P}}(\bar{X}_0\in A \cap K^2, \bar{Y}_0 \in A^{\prime}, U_0 \in \Delta, V_0 \in \Gamma)
\end{align*}for $A\subset H, A^{\prime} \subset Y^2, \Delta \subset U, \Gamma \subset V$ Borel,\\
(iii) and \begin{align*}&\mathbb{P}^*(U_n^*\in \Delta, V_n^* \in \Gamma|(X_m^*,i_m^*,Y^*_m)=(x_m,i_m,y_m), m\leq n, U_k^*=u_k,V_k^*=v_k,k<n)\\&=\bar{\mathbb{P}}(U_n\in \Delta, V_n \in \Gamma|(\bar{X}_m,\bar{Y}_m)=(x_m,y_m), m\leq n, U_k=u_k,V_k=v_k,k<n)\,\,\mbox{for}\,\,n\geq1\,.
\end{align*}From the above construction the following lemmas can be proved.
\begin{lem}The set $K^2\times \{1\}$ is an accessible atom of $\{(X_n^*,i^*_n)\}$ in the sense of Meyn and Tweedie (\cite{meyn}).
\end{lem}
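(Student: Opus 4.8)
The plan is to verify separately the two requirements in the definition of an accessible atom: that the one-step law out of $K^2\times\{1\}$ is independent of the starting point, and that $K^2\times\{1\}$ is reached with positive probability from every state of $H^*$ under the dynamics $q$. The first property is immediate from the construction. For any $x\in H_1$, and in particular for any $x\in K^2\times\{1\}$, part (i) of the construction gives $q(d\bar x,d\bar y\mid x,u,v)=\Theta^*(d\bar x)\eta^2(d\bar y)$ \emph{for every} $u\in U,\ v\in V$. Projecting onto the $H^*$-coordinate, the law of $(X_{n+1}^*,i_{n+1}^*)$ started at any point of $K^2\times\{1\}$ equals $\Theta^*$, regardless both of the point and of the (randomised, history-dependent) controls produced by the fixed strategies through (iii). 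Hence $K^2\times\{1\}$ is an atom, with common exit law $\nu:=\Theta^*$.

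For accessibility I would first record the key consequence of the strict positivity of $\varphi$. Writing $g(x,u,v,z):=\int_Y\varphi(x,u,v,z,y)\,\eta(dy)>0$, the $X^2$-marginal of $\bar p(\cdot\mid\bar x,u,v)$ has the strictly positive density $g(\hat x,u,v,\hat z)\,g(\tilde x,u,v,\tilde z)$ with respect to $\lambda\times\lambda$, so that
\begin{align*}
\bar p(K^2\times Y^2\mid\bar x,u,v)=\int_{K^2}g(\hat x,u,v,\hat z)\,g(\tilde x,u,v,\tilde z)\,(\lambda\times\lambda)(d\bar z)>0
\end{align*}
for \emph{every} $\bar x\in X^2$ and all $u,v$, since $(\lambda\times\lambda)(K^2)=\lambda(K)^2>0$. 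Thus the coupled chain enters $G=K^2$ in one step from any point with positive probability.

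With this I would reach $K^2\times\{1\}$ from an arbitrary $x=(x_0,i_0)\in H^*$ by cases. If $i_0=1$, part (i) gives one-step probability $\Theta^*(K^2\times\{1\})=\delta>0$. If $i_0=0$ and $x_0\notin K^2$, parts (i) together with the starring convention give one-step probability $\delta\,\bar p(K^2\times Y^2\mid x_0,u,v)>0$. Finally, if $x_0\in K^2$ and $i_0=0$, a direct computation with the kernel $\frac{1}{1-\delta}\bigl(\bar p^*-\delta\Theta^*\eta^2\bigr)$ shows that the successor lands in $K^2\times\{1\}$, in $K^2\times\{0\}$, or in $(K^2)^c\times\{0\}$ with probabilities $\frac{\delta}{1-\delta}(a-\delta)$, $a-\delta$, and $\frac{1-a}{1-\delta}$ respectively, where $a:=\bar p(K^2\times Y^2\mid x_0,u,v)\in[\delta,1]$ by the minorization inequality. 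If $a>\delta$ the atom is hit in one step; if $a=\delta$ the successor lies in $(K^2)^c\times\{0\}$ with probability one, and the preceding case applies at the next step. In every situation the atom is reached with positive probability within two steps, so $K^2\times\{1\}$ is accessible, and combined with the atom property this proves the lemma.

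The measure-theoretic bookkeeping with the starred measures is routine; the step carrying the real weight is the accessibility argument, and within it the observation that strict positivity of $\varphi$ forces $\bar p(K^2\times Y^2\mid\cdot)>0$ uniformly in the starting point, so that $G=K^2$ is reachable from everywhere. The only delicate point is the case $x_0\in K^2,\ i_0=0$, where the one-step hitting probability of the atom may degenerate to zero and the two-step argument above is required.
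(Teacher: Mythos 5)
Your argument is correct and is exactly the standard Athreya--Ney--Nummelin split-chain verification that the paper invokes implicitly: the paper offers no proof of this lemma (it merely asserts that it "can be proved" from the construction, citing Meyn and Tweedie), and your case analysis — in particular the computation of the exit probabilities $\frac{\delta}{1-\delta}(a-\delta)$, $a-\delta$, $\frac{1-a}{1-\delta}$ from $K^2\times\{0\}$ and the two-step argument handling the degenerate case $a=\delta$, together with the observation that strict positivity of $\varphi$ and $\lambda(K)>0$ force $\bar p(K^2\times Y^2\mid\cdot,u,v)>0$ uniformly in the state and the controls — supplies precisely the missing details. No gaps.
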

\begin{lem}For any Borel $A^{i}\subset H,B^{i}\subset Y^2, \Delta^i\subset U, \Gamma^i\subset V,0\leq i\leq n,n\geq 0$
\begin{align*}\mathbb{P}^*&\biggl(((X_0^*,i_0^*,Y_0^*,U_0^*,V_0^*),\cdots,(X_n^*,i_n^*,Y_n^*,U_n^*,V_n^*))\in \prod_{i=0}^n(A^i_0\cup A^i_1)\times B^i\times \Delta^i\times \Gamma^i\biggr)\\&=\bar{\mathbb{P}}\biggl(((\bar{X}_0,\bar{Y}_0,U_0,V_0),\cdots,(\bar{X}_n,\bar{Y}_n,U_n,V_n))\in \prod_{i=0}^n A^i\times B^i\times \Delta^i\times \Gamma^i\biggr)
\end{align*}
\end{lem}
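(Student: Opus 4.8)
The plan is to prove the identity by induction on $n$, the engine being that the $*$-operation, when its two binary fibres are recombined, returns the original measure. Concretely, for any finite measure $\mu$ on $H\times Y^2$,
$$\mu^*((A_0\cup A_1)\times D)=\mu^*(A_0\times D)+\mu^*(A_1\times D)=\mu(A\times D),$$
because the $\delta$-mass deleted from $A\cap K^2$ in $\mu^*(A_0\times D)$ is exactly the mass placed in $\mu^*(A_1\times D)$; the same cancellation holds on $H$. Thus forgetting the $\{0,1\}$-coordinate of a split object recovers the unsplit object, and since $\{(X^*_m,i^*_m)\in A^m_0\cup A^m_1\}=\{X^*_m\in A^m\}$, the event in the statement constrains only the positions, observations and controls.

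The second ingredient is the invariant
$$(\star)\qquad \text{conditional on }X^*_m\text{ and the past, }i^*_m\text{ is }\mathrm{Bernoulli}(\delta\, I_{K^2}(X^*_m)).$$
At $m=0$ this is read off from (ii): restricted to $A\subset K^2$ the mass on the fibre $A_1$ is $\delta\,\bar{\mathbb{P}}(\bar X_0\in A,\dots)$ and that on $A_0$ is $(1-\delta)\,\bar{\mathbb{P}}(\bar X_0\in A,\dots)$. For $m\ge 1$ the coordinate $i^*_m$ is produced by the kernel $q$ of (i); I would verify $(\star)$ by checking the three branches of (i). In the branch $\bar{p}^*$ one has $\bar{p}^*(A_1\times D)=\delta\,\bar{p}((A\cap K^2)\times D)$, so the $j=1$ mass sits on $K^2$ with density $\delta$; in the branch $\Theta^*\eta^2$ the position lies in $K^2=G$ almost surely and $\Theta^*(A_1)=\delta\,\Theta(A)$; in the branch $\frac{1}{1-\delta}(\bar{p}^*-\delta\,\Theta^*\eta^2)$ the $j=1$ and position marginals on $A\subset K^2$ are $\frac{\delta}{1-\delta}(\bar{p}(A\times D)-\delta\,\Theta(A)\eta^2(D))$ and $\frac{1}{1-\delta}(\bar{p}(A\times D)-\delta\,\Theta(A)\eta^2(D))$, whose ratio is again $\delta$. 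Hence in every case the freshly produced binary coordinate obeys $(\star)$, independently of the incoming one.

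For $n=0$ I would add the two displayed equations in (ii): their left sides sum to $\mathbb{P}^*((X^*_0,i^*_0)\in A_0\cup A_1,\dots)$ and, by the cancellation of the first paragraph, their right sides combine to $\bar{\mathbb{P}}(\bar X_0\in A,\bar Y_0\in A',U_0\in\Delta,V_0\in\Gamma)$. For the inductive step I would condition on the history through time $n-1$ and write the level-$n$ probability as the transition to $(X^*_n,i^*_n,Y^*_n)$ over $(A^n_0\cup A^n_1)\times B^n$, driven by $(X^*_{n-1},i^*_{n-1},U^*_{n-1},V^*_{n-1})$, followed by the controls $(U^*_n,V^*_n)$. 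The kernel $q$ depends on $i^*_{n-1}$, so I would average it over $i^*_{n-1}$ using the weights supplied by $(\star)$ at $m=n-1$: off $K^2$ one has $i^*_{n-1}=0$ and $q((A^n_0\cup A^n_1)\times B^n\mid(X^*_{n-1},0),U^*_{n-1},V^*_{n-1})=\bar{p}(A^n\times B^n\mid\cdots)$ by (i) and the first paragraph, while on $K^2$ the weights $(1-\delta,\delta)$ give
$$(1-\delta)\frac{1}{1-\delta}\bigl(\bar{p}^*-\delta\,\Theta^*\eta^2\bigr)+\delta\,\Theta^*\eta^2=\bar{p}^*,$$
which the first paragraph turns into $\bar{p}(A^n\times B^n\mid X^*_{n-1},U^*_{n-1},V^*_{n-1})$. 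In both cases the transition collapses to the original kernel $\bar{p}$. Finally (iii), whose right side does not involve the binary coordinates, integrates out $(U^*_n,V^*_n)$ exactly as the control kernel of $\bar{\mathbb{P}}$ does. Combining with the inductive hypothesis yields the level-$n$ identity.

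The main obstacle is precisely this binary bookkeeping. One cannot run the induction on the binary-forgotten law alone, because out of a state in $K^2$ the kernel $q$ is genuinely different for $i^*=0$ and $i^*=1$; the whole argument hinges on isolating the correct conditional law $(\star)$ of the binary coordinate, checking that it is consumed correctly by the mixing identity of the previous paragraph and regenerated by $\bar{p}^*$ and $\Theta^*$ at the next step. Once $(\star)$ is in force the collapse of $q$ to $\bar{p}$ is automatic and the claimed equality follows.
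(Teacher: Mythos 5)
The paper offers no proof of this lemma at all — it is merely asserted to follow "from the above construction," with the real argument deferred to the Meyn--Tweedie splitting machinery — and your induction, built on recombining the two fibres of the $*$-operation ($\mu^*(A_0\times D)+\mu^*(A_1\times D)=\mu(A\times D)$), the mixing identity $(1-\delta)\cdot\frac{1}{1-\delta}(\bar{p}^*-\delta\Theta^*\eta^2)+\delta\Theta^*\eta^2=\bar{p}^*$, and the propagated invariant $(\star)$ that $i_m^*$ is conditionally Bernoulli$(\delta I_{K^2}(X_m^*))$, is exactly the standard argument intended here and is correct. The one step worth stating explicitly is that $(\star)$ survives further conditioning on $(U_{n-1}^*,V_{n-1}^*)$ before you average the kernel $q$ over $i_{n-1}^*$, which holds precisely because the control kernel in (iii) does not read the binary coordinate — a point you already note in passing.
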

Let \begin{align}\label{stoptime}\tau=\min\{n\geq 0 : (X_n^*,i^*_n)\in K^2\times \{1\}\}\,.\end{align} Then the following lemma can be proved using $(A1)$ and standard arguments as in \cite{meyn}
\begin{lem}Under $(A1)$ we have, \begin{align}\label{bound}\mathbb{E}^*[\tau\,|\,(X_0^*,i_0^*)=(x,i)]=O(\mathcal{V}(x_1)+ \mathcal{V}(x_2))\end{align} for any $(x,i)=((x_1,x_2),i) \in X^2\times \{0,1\}$, where $\tau$ is as in \eqref{stoptime}.
\end{lem}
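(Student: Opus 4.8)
The plan is to manufacture a Foster--Lyapunov drift condition for the split chain $\{(X_n^*,i_n^*)\}$ whose ``small set'' is \emph{compact}, and then to read off \eqref{bound} from the comparison theorem of Meyn and Tweedie \cite{meyn}. Throughout write $\bar x=(x_1,x_2)$, set $\bar{\mathcal V}(\bar x)=\mathcal V(x_1)+\mathcal V(x_2)$, and lift it to $V^*(\bar x,i)=\bar{\mathcal V}(\bar x)$ on $H^*=X^2\times\{0,1\}$. Denote by $\bar P$ and $Q$ the one-step kernels of $\{\bar X_n\}$ and of the split chain respectively.

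First I would record the drift for the product chain. Applying \eqref{assump} to each of the two marginal chains (both evolve under the common controls $U_n,V_n$, and \eqref{assump} holds for every choice of controls) and adding the two inequalities gives
\[
\bar P\,\bar{\mathcal V}(\bar x)-\bar{\mathcal V}(\bar x)\le -\bigl(h(x_1)+h(x_2)\bigr)+c\,\bigl(I_K(x_1)+I_K(x_2)\bigr).
\]
The awkward point, and what I expect to be the crux, is that the perturbation $c(I_K(x_1)+I_K(x_2))$ is supported on $(K\times X)\cup(X\times K)$, which is \emph{not} compact, so this is not yet a usable drift inequality. This is exactly where the inf-compactness of $h$ (rather than mere boundedness) is used: put $g(\bar x)=\tfrac12(h(x_1)+h(x_2))\ge 1$, which is inf-compact on $X^2$, and fix $T\ge\max\{4c,\ \sup_K h\}$. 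Then $D:=\{h\le T\}\times\{h\le T\}$ is compact and contains $K^2$, while on $D^c$ at least one coordinate has $h>T\ge 4c$, so $g>2c\ge c(I_K(x_1)+I_K(x_2))$. Hence
\[
\bar P\,\bar{\mathcal V}-\bar{\mathcal V}\le -g+b\,I_{D},
\]
a genuine drift condition with $g\ge1$ inf-compact and $D$ compact, where $b$ absorbs the (bounded) right-hand side on $D$.

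Next I would transfer this to the split chain. By the distributional identity established above, the $X^2$-marginal of $\{(X_n^*,i_n^*)\}$ reproduces $\{\bar X_n\}$, and transitions out of the atom land in $\Theta$, which is concentrated on the compact set $K^2$; since $\mathcal V$ is continuous, $\int\bar{\mathcal V}\,d\Theta\le 2\sup_K\mathcal V<\infty$. Feeding the three branches of $q$ into $V^*$ and using the product-chain drift, one obtains
\[
Q\,V^*(\bar x,i)-V^*(\bar x,i)\le -g(\bar x)+b^*\,I_{D^*}(\bar x,i),
\]
where $g$ ignores the $i$-coordinate, $b^*<\infty$, and $D^*=D\times\{0,1\}$ is compact; the extra bounded contributions on $K^2\times\{0\}$ and on the atom appear precisely because $\Theta$ is supported on $K^2$. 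The minorization/density hypothesis of Section~2 makes $D^*$ petite, and together with the fact (established above) that $K^2\times\{1\}$ is an accessible atom, it guarantees that the atom is reached from $D^*$ with a probability bounded below uniformly on $D^*$.

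Finally I would invoke the comparison theorem of \cite{meyn}: with $\tau$ as in \eqref{stoptime} and $g\ge1$,
\[
\mathbb{E}^*_{(\bar x,i)}[\tau]\le \mathbb{E}^*_{(\bar x,i)}\Bigl[\sum_{k=0}^{\tau-1}g(X^*_k,i^*_k)\Bigr]\le V^*(\bar x,i)+b^*\,\mathbb{E}^*_{(\bar x,i)}\Bigl[\sum_{k=0}^{\tau-1}I_{D^*}(X^*_k,i^*_k)\Bigr].
\]
The second term is $O(1)$: the number of visits to $D^*$ before $\tau$ is stochastically dominated by a geometric variable (each visit to the petite set $D^*$ leads to the accessible atom before returning to $D^*$ with probability bounded below), so its mean is bounded uniformly in the starting point. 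Since $V^*(\bar x,i)=\mathcal V(x_1)+\mathcal V(x_2)$, this yields exactly \eqref{bound}. The principal obstacle is the passage in the second paragraph---turning \eqref{assump} into a drift whose small set is compact, which forces the use of inf-compactness of $h$---with a secondary (routine) difficulty in controlling the excursions between $D^*$ and the atom, where accessibility and the minorization condition enter.
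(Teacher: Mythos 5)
Your proposal is correct and follows exactly the route the paper intends: the paper offers no proof of this lemma beyond citing (A1) and ``standard arguments as in Meyn--Tweedie'', and your argument is precisely that standard argument written out, including the one genuinely non-routine point (replacing the non-compact set $(K\times X)\cup(X\times K)$ by a compact product of sublevel sets of $h$ via inf-compactness). The only phrase I would tighten is the parenthetical claim that each visit to $D^*$ reaches the atom ``before returning to $D^*$'' with probability bounded below; what you actually need (and what the construction delivers, thanks to the factor $\tfrac{1}{2}$ in the definition of $\delta$, which keeps the residual kernel on $K^2\times\{0\}$ charging the atom) is that from every point of $D^*$ the atom is hit in one step with probability bounded below, which already gives the geometric domination of the number of visits to $D^*$ before $\tau$.
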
The following lemma gives a bound on the difference of $V_{\alpha}(.)$ for two different values of its argument.
\begin{lem}For $\hat{\psi}, \tilde{\psi} \in \mathcal{P}_0(X)$, there exists a suitable constant $\bar{K}$ such that
$$|V_{\alpha}(\hat{\psi})-V_{\alpha}(\tilde{\psi})|\leq \bar{K}[\int\mathcal{V}d \hat{\psi}+\int\mathcal{V}d \tilde{\psi}]\,.$$
\end{lem}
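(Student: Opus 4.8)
The plan is to pit the two value functions against a single coupled process and show that the discounted cost difference is controlled by the time the coupled pair needs to reach the pseudo-atom. I would bound $V_\alpha(\hat\psi)-V_\alpha(\tilde\psi)$ from above and get the reverse inequality by interchanging $\hat\psi,\tilde\psi$ together with the roles of the two players, so only one side needs work. Invoke Theorem \ref{discounted} to obtain player 1's optimal stationary selector $u^*$ and player 2's optimal stationary selector $v^*$. Feed these into the coupled measure $\bar{\mathbb P}^{\pi^1,\pi^2}_{\hat\psi,\tilde\psi}$ built above, letting player 1 act through $u^*$ driven by the $\{\hat X_n\}$-filter and player 2 through $v^*$ driven by the $\{\tilde X_n\}$-filter, so that both chains are advanced by the \emph{same} control pair $\{(U_n,V_n)\}$. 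Reading this correlated pair as an admissible randomized response in each one-sided problem, and using $\bar{\mathbb E}[c(\hat X_k,U_k,V_k)]=\bar{\mathbb E}[\tilde c(\hat\Psi_k,U_k,V_k)]$ as in \eqref{payoff}, gives
\begin{align*}
V_\alpha(\hat\psi)\le\bar{\mathbb E}\Big[\sum_{k\ge0}\alpha^k c(\hat X_k,U_k,V_k)\Big],\qquad V_\alpha(\tilde\psi)\ge\bar{\mathbb E}\Big[\sum_{k\ge0}\alpha^k c(\tilde X_k,U_k,V_k)\Big],
\end{align*}
the first because $u^*$ is optimal for player 1 against every player 2, the second because $v^*$ is optimal for player 2 against every player 1. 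Subtracting,
\begin{align*}
V_\alpha(\hat\psi)-V_\alpha(\tilde\psi)\le\bar{\mathbb E}\Big[\sum_{k\ge0}\alpha^k\big(c(\hat X_k,U_k,V_k)-c(\tilde X_k,U_k,V_k)\big)\Big].
\end{align*}

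Next I would pass to the split chain $\{(X_n^*,i_n^*)\}$ via the distributional identity established above and split the sum at the hitting time $\tau$ of the atom $K^2\times\{1\}$ from \eqref{stoptime}. Beyond $\tau$ the latent pair regenerates from the common measure $\Theta$; since both coordinates are then governed by identical dynamics and by the single common control sequence $\{(U_n,V_n)\}$, the two chains can be taken to coincide from $\tau$ onward, whence the integrand $c(\hat X_k,U_k,V_k)-c(\tilde X_k,U_k,V_k)$ vanishes for $k\ge\tau$. On the surviving initial segment I use $0\le c\le\|c\|_\infty$ and $\alpha^k\le1$, so that the difference is at most $\|c\|_\infty\,\mathbb E^*[\tau]$. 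Finally I integrate the hitting-time estimate \eqref{bound} against the initial law: with $(X_0^*,i_0^*)=((x_1,x_2),i)$ and $(x_1,x_2)$ distributed as $\hat\psi\otimes\tilde\psi$ one gets
\begin{align*}
\mathbb E^*[\tau]=O\Big(\int\mathcal V\,d\hat\psi+\int\mathcal V\,d\tilde\psi\Big),
\end{align*}
and choosing $\bar K$ to absorb $\|c\|_\infty$ and the $O(\cdot)$ constant yields the bound; the reverse inequality follows by symmetry.

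The step I expect to be delicate is the vanishing of the post-$\tau$ contribution. The regeneration supplied by the Athreya–Ney–Nummelin construction is at the level of the latent state pair, not of the information states $\hat\Psi,\tilde\Psi$ on which the controls—and hence the value function—actually depend, and the filters do not reset at the atom. The point to pin down is that, precisely because in the present construction both chains are propagated by one and the same control sequence $\{(U_n,V_n)\}$, once the latent states are made to agree at the regeneration they remain equal, so the running-cost difference, being a function of the latent state and the \emph{common} controls alone, is genuinely null past $\tau$ regardless of the disagreement of the filters. Carrying this coincidence out inside the minorization/pseudo-atom framework, and checking that the correlated strategies used to derive the two one-sided inequalities are admissible in each marginal game, is the technical heart, and is exactly where the partially observed machinery of \cite{borkar} must be adapted to the game setting.
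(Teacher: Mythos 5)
Your proposal is essentially the paper's proof: the same coupled measure $\bar{\mathbb{P}}^{\pi^1,\pi^2}_{\hat{\psi},\tilde{\psi}}$ with $\pi^1$ optimal for player 1 from $\hat{\psi}$ and $\pi^2$ optimal for player 2 from $\tilde{\psi}$, the same pair of one-sided inequalities obtained from their optimality against arbitrary (here, correlated) opponent strategies, the same passage to the split chain and truncation of the sum at the regeneration time $\tau$, and the same integration of \eqref{bound} against $\hat{\psi}\otimes\tilde{\psi}$. The one place you diverge --- asserting that the latent chains coincide pathwise from $\tau$ onward --- is not literally available under the given construction, since $\bar{p}=p\otimes p$ and $\Theta$ is a product measure, so the two coordinates are redrawn independently at every step even from equal states and equal controls; the paper instead disposes of the post-$\tau$ terms by observing that $\hat{X}^*_{\tau+m}$ and $\tilde{X}^*_{\tau+m}$ have the same law conditioned on the information up to $\tau$, which suffices because only the expectation of the cost difference enters the bound.
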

\begin{proof}Let $V_{\alpha}(\hat{\psi})\geq V_{\alpha}(\tilde{\psi})$. The other case can be handled with a symmetric argument. Let $\pi^1=\{\pi_n^1\}$ be an optimal policy for player 1 for the discounted payoff \textbf{POSG} with initial distribution $\hat{\psi}$ and let $\pi^2=\{\pi_n^2\}$ be an optimal policy for player 2 for the discounted payoff \textbf{POSG} with initial distribution $\tilde{\psi}$. Then we have
\begin{align*}|V_{\alpha}(\hat{\psi})-V_{\alpha}(\tilde{\psi})|&\leq |\sum_{m=0}^{\infty}\alpha^m\bar{\mathbb{E}}^{\pi^1,\pi^2}_{\hat{\psi},\tilde{\psi}}[c(\hat{X}_n,U_n,V_n)]-
\sum_{m=0}^{\infty}\alpha^m\bar{\mathbb{E}}^{\pi^1,\pi^2}_{\hat{\psi},\tilde{\psi}}[c(\tilde{X}_n,U_n,V_n)]|\\
&=|\sum_{m=0}^{\infty}\alpha^m\bar{\mathbb{E}}[c(\hat{X}_n,U_n,V_n)-c(\tilde{X}_n,U_n,V_n)]|\\
&\leq |\mathbb{E}^*\sum_{m=0}^{\tau}\alpha^m[c(\hat{X}_n^*,U^*_n,V^*_n)-c(\tilde{X}_n^*,U^*_n,V^*_n)]|\\
&\leq 2 ||c||_{\infty}\mathbb{E}^*(\tau)
\end{align*}where the third step follows from the fact that $\hat{X}^*_{\tau+m},\tilde{X}_{\tau+m}^*$ for $m \geq 1$ has the same law conditioned on all the information up to time $\tau$. Thus from \eqref{bound} we have
$$|V_{\alpha}(\psi_1)-V_{\alpha}(\psi_2)|\leq \bar{K}\bar{\mathbb{E}}[\mathcal{V}(\hat{X}_0)+\mathcal{V}(\tilde{X}_0)]\,.$$ Hence the lemma follows.
\end{proof}Now fix $\psi^* \in \mathcal{P}_0(X)$. Define $\bar{V}_{\alpha}(\psi)=V_{\alpha}(\psi)-V_{\alpha}(\psi^*)$. Thus substituting in \eqref{spe} we get
\begin{align}\nonumber\bar{V}_{\alpha}(\psi)+ (1-\alpha)V_{\alpha}(\psi^*)&=\min_{\nu \in \mathcal{P}(V)}\max_{\mu\in \mathcal{P}(U)}\biggl[\bar{\tilde{c}}(\psi,\mu,\nu)
+ \alpha \int_{\mathcal{P}_0(X)} \bar{V}_{\alpha}(\psi^{\prime})\phi(d\psi^{\prime}|\psi,\mu,\nu)\biggr]\\
&=\max_{\mu \in \mathcal{P}(U)}\min_{\nu \in \mathcal{P}(V)}\biggl[\tilde{\bar{c}}(\psi,\mu,\nu)
+ \alpha \int_{\mathcal{P}_0(X)}\bar{V}_{\alpha}(\psi^{\prime})\phi(d\psi^{\prime}|\psi,\mu,\nu)\biggr]\,.
\end{align}
Now $(1-\alpha)V_{\alpha}(\psi^*)$ is bounded. Thus we can find an $\alpha(n)\rightarrow 1$ such that \begin{align}\label{constant}(1-\alpha(n))V_{\alpha(n)}(\psi^*)\rightarrow \gamma\end{align} for some $\gamma \in \mathbb{R}$. Let $\hat{V}(\psi)= \displaystyle \limsup_{n \rightarrow \infty}\bar{V}_{\alpha(n)}(\psi)$ and $\underline{V}(\psi)=\displaystyle\liminf_{n \rightarrow \infty}\bar{V}_{\alpha(n)}(\psi)$.
\begin{lem}The function $\hat{V}$ satisfies
\begin{align}\label{ub}\hat{V}(\psi)+ \gamma &\leq \max_{\mu \in \mathcal{P}(U)}\min_{\nu \in \mathcal{P}(V)}\biggl[\bar{\tilde{c}}(\psi,\mu,\nu)+ \int_{\mathcal{P}_0(X)} \hat{V}(\psi^{\prime})\phi(d\psi^{\prime}|\pi,\mu,\nu)\biggr]\,,
\end{align} where $\gamma$ is as in \eqref{constant}.
\end{lem}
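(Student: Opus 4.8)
The plan is to pass to the limit along $\alpha(n)\to 1$ in the max--min form of the rescaled Shapley equation, using the outer maximiser to produce a single relaxed action $\mu_0$ witnessing the asserted bound. First I would start from
\begin{align*}
\bar{V}_{\alpha(n)}(\psi)+(1-\alpha(n))V_{\alpha(n)}(\psi^*)=\max_{\mu\in\mathcal{P}(U)}\min_{\nu\in\mathcal{P}(V)}\Big[\bar{\tilde{c}}(\psi,\mu,\nu)+\alpha(n)\int_{\mathcal{P}_0(X)}\bar{V}_{\alpha(n)}(\psi')\phi(d\psi'|\psi,\mu,\nu)\Big].
\end{align*}
Since $U$ is compact, $\mathcal{P}(U)$ is weakly compact and the inner quantity is upper semicontinuous in $\mu$, so an outer maximiser $\mu_n\in\mathcal{P}(U)$ exists for each $n$. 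Fixing $\psi$, I choose a subsequence along which $\bar{V}_{\alpha(n)}(\psi)\to\hat{V}(\psi)$ and, by compactness of $\mathcal{P}(U)$, pass to a further subsequence so that $\mu_n\to\mu_0$ weakly. Because $\mu_n$ is the maximiser, for every fixed $\nu$,
\begin{align*}
\bar{V}_{\alpha(n)}(\psi)+(1-\alpha(n))V_{\alpha(n)}(\psi^*)\le\bar{\tilde{c}}(\psi,\mu_n,\nu)+\alpha(n)\int_{\mathcal{P}_0(X)}\bar{V}_{\alpha(n)}(\psi')\phi(d\psi'|\psi,\mu_n,\nu).
\end{align*}

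Next I would take $\limsup_n$ on both sides. The left side tends to $\hat{V}(\psi)+\gamma$ by the choice of subsequence and by \eqref{constant}. On the right, $\alpha(n)\to1$ and the cost term converges to $\bar{\tilde{c}}(\psi,\mu_0,\nu)$, since $c$ is bounded continuous, whence $\bar{\tilde{c}}$ is jointly weakly continuous in $(\mu,\nu)$ and $\mu_n\to\mu_0$. It then suffices to establish
\begin{align*}
\limsup_{n\to\infty}\int_{\mathcal{P}_0(X)}\bar{V}_{\alpha(n)}(\psi')\phi(d\psi'|\psi,\mu_n,\nu)\le\int_{\mathcal{P}_0(X)}\hat{V}(\psi')\phi(d\psi'|\psi,\mu_0,\nu),
\end{align*}
for then $\hat{V}(\psi)+\gamma\le\bar{\tilde{c}}(\psi,\mu_0,\nu)+\int\hat{V}(\psi')\phi(d\psi'|\psi,\mu_0,\nu)$ for every $\nu$; taking the minimum over $\nu$ and then bounding by the maximum over $\mu$ gives \eqref{ub}.

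The ingredients for the displayed integral bound are domination and a Fatou-type step. The previous lemma supplies $|\bar{V}_{\alpha}(\psi')|\le\bar{K}[\int\mathcal{V}\,d\psi'+\int\mathcal{V}\,d\psi^*]=:g(\psi')$, and assumption \textbf{(A1)} bounds $\int_{\mathcal{P}_0(X)}(\int\mathcal{V}\,d\psi')\phi(d\psi'|\psi,\mu,\nu)$ by $\int\mathcal{V}\,d\psi+\text{const}$ uniformly in $(\mu,\nu)$, so $g$ is a legitimate, uniformly integrable dominating function for the family $\{\bar{V}_{\alpha(n)}\}$. If the controlling measure were held fixed at $\phi(\cdot|\psi,\mu_0,\nu)$, the ordinary reverse Fatou lemma together with $\limsup_n\bar{V}_{\alpha(n)}(\psi')\le\hat{V}(\psi')$ would give the bound at once.

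The main obstacle is exactly that the measures $\phi(\cdot|\psi,\mu_n,\nu)$ vary with $n$ through $\mu_n$ and converge only weakly to $\phi(\cdot|\psi,\mu_0,\nu)$, so plain reverse Fatou does not apply to the left side. I expect to handle this via a Fatou lemma for weakly convergent measures. Writing
\begin{align*}
\int_{\mathcal{P}_0(X)}\bar{V}_{\alpha(n)}\,d\phi(\cdot|\psi,\mu_n,\nu)=\int_U\int_V\Big(\int_{\mathcal{P}_0(X)}\bar{V}_{\alpha(n)}(\psi')\,\tilde{\phi}(d\psi'|\psi,u,v)\Big)\mu_n(du)\,\nu(dv),
\end{align*}
I would first apply reverse Fatou to the inner integral, whose kernel $\tilde{\phi}(\cdot|\psi,u,v)$ (expressible through the density $\varphi\lambda\eta$ and the filter recursion \eqref{filter}) is fixed in $n$, obtaining for each $(u,v)$ an upper bound by $\int\hat{V}\,d\tilde{\phi}(\cdot|\psi,u,v)$, and then integrate these pointwise-controlled functions against $\mu_n\to\mu_0$, using the weak continuity of $(u,v)\mapsto\tilde{\phi}(\cdot|\psi,u,v)$. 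The delicate point, which I expect to be the crux of the argument, is interchanging the $\limsup$ with integration against the \emph{weakly} (not setwise) convergent $\mu_n$; this is precisely where the domination from the previous lemma and the uniform integrability furnished by \textbf{(A1)} are indispensable.
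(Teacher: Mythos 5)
Your proposal follows essentially the same route as the paper's proof: extract the outer maximiser $\mu_n^*$, pass to a subsequence along which $\bar{V}_{\alpha(n)}(\psi)\rightarrow\hat{V}(\psi)$ and $\mu_n^*\rightarrow\mu^*$, interchange the $\limsup$ with the minimum over $\nu$, and control the integral term via the domination $|\bar{V}_{\alpha}(\psi')|\leq K_1(1+\int\mathcal{V}\,d\psi')$ from the preceding lemma. The ``delicate point'' you flag at the end --- a Fatou-type inequality against the weakly convergent measures $\phi(\cdot|\psi,\mu_n^*,\nu)$ --- is exactly the step the paper disposes of by invoking Lemma 8.3.7 of \cite{lerma}, so your argument is correct once that cited lemma is used in place of your sketched two-stage Fatou decomposition (which by itself only relocates, rather than resolves, the varying-measure issue).
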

\begin{proof}We have
\begin{align*}\bar{V}_{\alpha(n)}(\psi)+ (1-\alpha(n))V_{\alpha(n)}(\psi^*)&= \max_{\mu \in \mathcal{P}(U)}\min_{\nu \in \mathcal{P}(V)}\biggl[\bar{\tilde{c}}(\psi,\mu,\nu)+ \alpha(n) \int_{\mathcal{P}_0(X)} \bar{V}_{\alpha(n)}(\psi^{\prime})\phi(d\psi^{\prime}|\psi,\mu,\nu)\biggr]\,.
\end{align*}Now taking limit $n \rightarrow \infty$ in the above we get
\begin{align*}\hat{V}(\psi)+ \gamma &= \limsup _{n \rightarrow \infty}\max_{\mu \in \mathcal{P}(U)}\min_{\nu \in \mathcal{P}(V)}\biggl[\bar{\tilde{c}}(\psi,\mu,\nu)+ \alpha(n) \int_{\mathcal{P}_0(X)} \bar{V}_{\alpha(n)}(\psi^{\prime})\phi(d\psi^{\prime}|\psi,\mu,\nu)\biggr]\\
&= \limsup _{n \rightarrow \infty}\min_{\nu \in \mathcal{P}(V)}\biggl[\bar{\tilde{c}}(\psi,\mu_n^*,\nu)+ \alpha(n) \int_{\mathcal{P}_0(X)} \bar{V}_{\alpha(n)}(\psi^{\prime})\phi(d\psi^{\prime}|\psi,\mu_n^*,\nu)\biggr]\\
& \leq \min_{\nu \in \mathcal{P}(V)}\limsup _{n \rightarrow \infty}\biggl[\bar{\tilde{c}}(\psi,\mu_n^*,\nu)+ \alpha(n) \int_{\mathcal{P}_0(X)}\bar{V}_{\alpha(n)}(\psi^{\prime})\phi(d\psi^{\prime}|\psi,\mu_n^*,\nu)\biggr]\,.
\end{align*}
In the second step $\mu_n^*$ is the outer maximiser. Now fix $\pi$. By dropping to a subsequence if necessary, we may suppose that $\bar{V}_{\alpha(n)}(\psi)\rightarrow \hat{V}(\psi)$ and $\mu_n^*\rightarrow \mu^*$ in $\mathcal{P}(U)$. Now by previous lemma $|\bar{V}_{\alpha}(\psi)|\leq K_1(1+\int \mathcal{V}d\psi)$. Thus by Lemma 8.3.7 in \cite{lerma}, the last expression is bounded above by
\begin{align*}&\min_{\nu \in \mathcal{P}(V)}\biggl[\bar{\tilde{c}}(\psi,\mu^*,\nu)+ \int_{\mathcal{P}_0(X)} \hat{V}(\psi^{\prime})\phi(d\psi^{\prime}|\psi,\mu_n^*,\nu)\biggr]\\
&\leq \max_{\mu \in \mathcal{P}(U)}\min_{\nu \in \mathcal{P}(V)}\biggl[\bar{\tilde{c}}(\psi,\mu,\nu)+ \int_{\mathcal{P}_0(X)} \hat{V}(\psi^{\prime})\phi(d\psi^{\prime}|\psi,\mu,\nu)\biggr]\,.
\end{align*} The claim follows.
\end{proof}
Similarly we have the following result.
\begin{lem}The function $\underline{V}$ satisfies
\begin{align}\label{lb}\underline{V}(\psi)+ \gamma &\geq \min_{\nu \in \mathcal{P}(V)}\max_{\mu \in \mathcal{P}(U)}\biggl[\bar{\tilde{c}}(\psi,\mu,\nu)+ \int_{\mathcal{P}_0(X)}\underline{V}(\psi^{\prime})\phi(d\psi^{\prime}|\psi,\mu,\nu)\biggr]\,.
\end{align}
\end{lem}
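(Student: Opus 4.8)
The plan is to mirror the proof of the preceding lemma, reversing every inequality and interchanging the roles of the two players. I would start from the $\min_\nu\max_\mu$ form of the Shapley equation for the relative discounted value,
$$\bar V_{\alpha(n)}(\psi) + (1-\alpha(n))V_{\alpha(n)}(\psi^*) = \min_{\nu\in\mathcal P(V)}\max_{\mu\in\mathcal P(U)}\Bigl[\bar{\tilde c}(\psi,\mu,\nu) + \alpha(n)\int_{\mathcal P_0(X)}\bar V_{\alpha(n)}(\psi')\phi(d\psi'|\psi,\mu,\nu)\Bigr],$$
and take $\liminf_{n\to\infty}$. Since $(1-\alpha(n))V_{\alpha(n)}(\psi^*)\to\gamma$ by \eqref{constant} and this term converges, the left-hand side tends to $\underline V(\psi)+\gamma$ along a subsequence realizing the liminf of $\bar V_{\alpha(n)}(\psi)$.

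For the right-hand side I would let $\nu_n^*\in\mathcal P(V)$ be an outer minimiser at stage $n$, which exists because $\mathcal P(V)$ is compact, so that the bracket equals $\max_\mu[\cdots]$ evaluated at $\nu_n^*$. Using compactness of $\mathcal P(V)$ again, I can pass to a further subsequence along which simultaneously $\bar V_{\alpha(n)}(\psi)\to\underline V(\psi)$ and $\nu_n^*\to\nu^*$. For each fixed $\mu$ the maximum dominates the value at that $\mu$, so $\max_{\mu'}[\cdots\text{ at }\nu_n^*]\geq[\cdots\text{ at }(\mu,\nu_n^*)]$; taking $\liminf$ and postponing the maximisation over $\mu$ to the very end is what will produce the correct direction of inequality.

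The main obstacle is the limiting behaviour of $\int_{\mathcal P_0(X)}\bar V_{\alpha(n)}(\psi')\phi(d\psi'|\psi,\mu,\nu_n^*)$, where both the integrand and the transition kernel move with $n$. The running cost converges by continuity of $\bar{\tilde c}$ in $\nu$, but the integral requires a lower-semicontinuity (Fatou-type) statement under the joint convergence $\bar V_{\alpha(n)}\to\underline V$ and $\nu_n^*\to\nu^*$. This is precisely where the uniform bound $|\bar V_{\alpha}(\psi)|\leq K_1(1+\int\mathcal V\,d\psi)$ from the preceding lemma, together with Lemma 8.3.7 of \cite{lerma}, is invoked: in the upper-bound lemma these tools produced an upper-semicontinuity estimate against $\hat V$, and here they must produce
$$\liminf_{n\to\infty}\int_{\mathcal P_0(X)}\bar V_{\alpha(n)}(\psi')\phi(d\psi'|\psi,\mu,\nu_n^*)\geq\int_{\mathcal P_0(X)}\underline V(\psi')\phi(d\psi'|\psi,\mu,\nu^*).$$

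Combining these, I would obtain for every fixed $\mu$ the bound $\underline V(\psi)+\gamma\geq\bar{\tilde c}(\psi,\mu,\nu^*)+\int_{\mathcal P_0(X)}\underline V(\psi')\phi(d\psi'|\psi,\mu,\nu^*)$. Since the left-hand side is independent of $\mu$, I can now maximise over $\mu\in\mathcal P(U)$ on the right, and then bound the result below by the minimum over $\nu\in\mathcal P(V)$, since $\nu^*$ is one admissible choice. This yields $\underline V(\psi)+\gamma\geq\min_\nu\max_\mu[\bar{\tilde c}(\psi,\mu,\nu)+\int\underline V\,d\phi]$, which is exactly \eqref{lb}.
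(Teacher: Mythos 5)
Your proposal is correct and follows essentially the same route as the paper: start from the $\min_{\nu}\max_{\mu}$ form of the Shapley equation, insert the outer minimiser $\nu_n^*$, interchange $\liminf$ with the maximisation over $\mu$ (equivalently, fix $\mu$ first and maximise at the end), extract a convergent subsequence $\nu_n^*\to\nu^*$ by compactness, and invoke the growth bound $|\bar V_{\alpha}(\psi)|\leq K_1(1+\int\mathcal V\,d\psi)$ together with Lemma 8.3.7 of Hern\'andez-Lerma and Lasserre for the Fatou-type lower-semicontinuity of the integral term, before dominating by the $\min_{\nu}\max_{\mu}$. You have correctly identified the one genuinely delicate step (the joint limit of integrand and kernel), which the paper handles by reference to the analogous argument in the preceding lemma.
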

\begin{proof}We have
\begin{align*}\bar{V}_{\alpha(n)}(\psi)+ (1-\alpha(n))V_{\alpha(n)}(\psi^*)&= \min_{\nu \in \mathcal{P}(V)}\max_{\mu \in \mathcal{P}(U)}\biggl[\bar{\tilde{c}}(\psi,\mu,\nu)+ \alpha(n) \int_{\mathcal{P}_0(X)} \bar{V}_{\alpha(n)}(\psi^{\prime})\phi(d\psi^{\prime}|\psi,\mu,\nu)\biggr]\,.
\end{align*}Now taking limit $n \rightarrow \infty$ in the above we get
\begin{align*}\underline{V}(\psi)+ \gamma &= \liminf _{n \rightarrow \infty}\min_{\nu \in \mathcal{P}(V)}\max_{\mu \in \mathcal{P}(U)}\biggl[\bar{\tilde{c}}(\psi,\mu,\nu)+ \alpha(n) \int_{\mathcal{P}_0(X)} \bar{V}_{\alpha(n)}(\psi^{\prime})\phi(d\psi^{\prime}|\psi,\mu,\nu)\biggr]\\
&= \liminf _{n \rightarrow \infty}\max_{\mu \in \mathcal{P}(U)}\biggl[\bar{\tilde{c}}(\psi,\mu,\nu^*_n)+ \alpha(n) \int_{\mathcal{P}_0(X)}\bar{V}_{\alpha(n)}(\psi^{\prime})\phi(d\psi^{\prime}|\psi,\mu,\nu^*_n)\biggr]\\
& \geq \max_{\mu \in \mathcal{P}(U)}\liminf _{n \rightarrow \infty}\biggl[\bar{\tilde{c}}(\psi,\mu,\nu^*_n)+ \alpha(n) \int_{\mathcal{P}_0(X)}\bar{V}_{\alpha(n)}(\psi^{\prime})\phi(d\psi^{\prime}|\psi,\mu,\nu^*_n)\biggr]\,.
\end{align*}
In the second step $\nu_n^*$ is the outer minimiser. Now by arguments analogous to the proof of the above lemma we have that there exists a $\nu^* \in \mathcal{P}(V)$ such that the last expression is bounded below by
\begin{align*}&\max_{\mu \in \mathcal{P}(U)}\biggl[\bar{\tilde{c}}(\psi,\mu,\nu^*)+ \int_{\mathcal{P}_0(X)} \underline{V}(\psi^{\prime})\phi(d\psi^{\prime}|\psi,\mu,\nu^*)\biggr]\\
&\geq \min_{\nu \in \mathcal{P}(V)}\max_{\mu \in \mathcal{P}(U)}\biggl[\bar{\tilde{c}}(\psi,\mu,\nu)
+ \int_{\mathcal{P}_0(X)}\underline{V}(\psi^{\prime})\phi(d\psi^{\prime}|\psi,\mu,\nu)\biggr]\,.
\end{align*} The claim follows.
\end{proof}
Finally we get the following theorem:
\begin{thm}\label{thm1}Assume \textbf{(A1-A2)}. Then $\gamma$ (as in \eqref{constant}) is the value of the \textbf{COSG}. Moreover let $u^*:\mathcal{P}_0(X)\rightarrow \mathcal{P}(U)$ be a measurable function such that $u^*(.)$ is the outer maximiser of the righthand side of \eqref{ub} (exists by our assumptions and a standard measurable selection theorem). Then the strategy $\{{\pi^*_n}^1\}$ given by $${\pi^*_n}^1(\cdot|i_n)= u^*(\psi_n)(\cdot)$$ is an optimal strategy for player 1. Similarly let $v^*:\mathcal{P}_0(X)\rightarrow \mathcal{P}(V)$ be a measurable function such that $v^*(.)$ is the outer minimiser of the righthand side of \eqref{lb}. Then $\{{\pi^*_n}^2\}$ given by $${\pi^*_n}^2(\cdot|i_n)= v^*(\psi_n)(\cdot)$$ is an optimal strategy for player 2.
\end{thm}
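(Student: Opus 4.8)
The plan is to establish two one-sided bounds: that player $1$, using the stationary strategy $\{{\pi^*_n}^1\}$ built from $u^*$, can guarantee at least $\gamma$ against every $\pi^2 \in \Pi^2$, and symmetrically that player $2$, using $\{{\pi^*_n}^2\}$ built from $v^*$, can guarantee at most $\gamma$ against every $\pi^1 \in \Pi^1$. Together with the elementary inequality $\sup_{\pi^1}\inf_{\pi^2} V_{\pi^1,\pi^2}(\psi) \leq \inf_{\pi^2}\sup_{\pi^1} V_{\pi^1,\pi^2}(\psi)$, these two bounds will squeeze both values to $\gamma$ and simultaneously verify the defining inequalities for optimality.

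First I would turn \eqref{ub} into a pathwise one-step inequality. Since $u^*(\psi)$ is an outer maximiser of its right-hand side, for every $\psi$ and every $\nu \in \mathcal{P}(V)$,
\begin{align*}
\hat{V}(\psi) + \gamma \leq \bar{\tilde{c}}(\psi, u^*(\psi), \nu) + \int_{\mathcal{P}_0(X)} \hat{V}(\psi')\, \phi(d\psi'|\psi, u^*(\psi), \nu)\,.
\end{align*}
Fixing an arbitrary $\pi^2 \in \Pi^2$ and letting $\{\Psi_n\}$ be the filter process generated by $({\pi^*}^1, \pi^2)$ with $\Psi_0 = \psi$, I would substitute $\psi = \Psi_n$ and $\nu = \nu_n := \pi_n^2(\cdot|i_n)$ pathwise, use that the one-step law of $\Psi_{n+1}$ equals $\phi(\cdot|\Psi_n, u^*(\Psi_n), \nu_n)$ by the definition of $\phi$, take expectations, and telescope over $n=0,\dots,N-1$ to obtain
\begin{align*}
\gamma \leq \frac{1}{N}\sum_{n=0}^{N-1}\mathbb{E}[\bar{\tilde{c}}(\Psi_n, u^*(\Psi_n), \nu_n)] + \frac{1}{N}\mathbb{E}[\hat{V}(\Psi_N)] - \frac{1}{N}\mathbb{E}[\hat{V}(\Psi_0)]\,.
\end{align*}
Letting $N \to \infty$ and recalling \eqref{payoff}, this yields $V_{{\pi^*}^1, \pi^2}(\psi) \geq \gamma$.

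The symmetric argument, starting from \eqref{lb} and the outer minimiser $v^*$, gives the reversed telescoped bound; taking $\limsup_N$ there produces $V_{\pi^1, {\pi^*}^2}(\psi) \leq \gamma$ for every $\pi^1$. Combining the two, $\gamma \leq \sup_{\pi^1}\inf_{\pi^2}V_{\pi^1,\pi^2}(\psi) \leq \inf_{\pi^2}\sup_{\pi^1}V_{\pi^1,\pi^2}(\psi) \leq \gamma$, so all three agree and $\gamma$ is the value of the \textbf{COSG}; and since $V_{{\pi^*}^1,\pi^2}(\psi)\geq\gamma$ for all $\pi^2$ and $V_{\pi^1,{\pi^*}^2}(\psi)\leq\gamma$ for all $\pi^1$, the strategies $\{{\pi^*_n}^1\}$ and $\{{\pi^*_n}^2\}$ are optimal by definition.

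The hard part will be the vanishing of the boundary term $\frac{1}{N}\mathbb{E}[\hat{V}(\Psi_N)]$, which is the only place the Lyapunov hypothesis genuinely enters. I would inherit from the previous lemma the growth bound $|\hat{V}(\psi)| \leq K_1(1+\int\mathcal{V}\,d\psi)$ (it passes to the $\limsup$), so that $|\mathbb{E}[\hat{V}(\Psi_N)]| \leq K_1(1 + \mathbb{E}[\mathcal{V}(X_N)])$, and then apply \textbf{(A2)} to conclude $\frac{1}{N}\mathbb{E}[\hat{V}(\Psi_N)] \to 0$; the same bound, together with $\Psi_n \in \mathcal{P}_0(X)$, keeps every expectation finite. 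Two routine points remain: the measurable selections $u^*, v^*$ asserted in the statement, and the check that $\mathbb{E}[\bar{\tilde{c}}(\Psi_n, u^*(\Psi_n), \nu_n)]$ is exactly the summand of \eqref{payoff} after averaging over the action randomisations.
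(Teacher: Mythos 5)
Your proposal is correct and follows essentially the same route as the paper: use the outer maximiser (resp.\ minimiser) to reduce \eqref{ub} (resp.\ \eqref{lb}) to a one-step inequality under $({\pi^*}^1,\pi^2)$ for arbitrary $\pi^2$ (resp.\ $(\pi^1,{\pi^*}^2)$ for arbitrary $\pi^1$), telescope, divide by $n$, and kill the boundary term via the growth bound on $\hat{V}$, $\underline{V}$ and \textbf{(A2)}. You are in fact slightly more explicit than the paper about why $\frac{1}{N}\mathbb{E}[\hat{V}(\Psi_N)]\to 0$ and about the final squeeze $\gamma\leq\sup\inf\leq\inf\sup\leq\gamma$, which the paper compresses into ``the conclusions follow.''
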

\begin{proof}Let $\{\pi^2_n\}$ be an arbitrary admissible strategy of player 2.
Then we have from \eqref{ub}
\begin{align*}\mathbb{E}^{{\pi^*}^1,\pi^2}_{\psi}[\hat{V}(\Psi_n)]+ \gamma &\leq \mathbb{E}^{{\pi^*}^1,\pi^2}_{\psi}[\tilde{c}(\Psi_n,U_n,V_n)]+\mathbb{E}^{{\pi^*}^1,\pi^2}_{\psi}[\hat{V}(\Psi_{n+1})],\,\,n\geq 0
\end{align*}Therefore we have
\begin{align*}\gamma \leq \frac{1}{n}\sum_{m=0}^{n-1}\mathbb{E}^{{\pi^*}^1,\pi^2}_{\psi}[\tilde{c}(\Psi_m,U_m,V_m)]+\frac{\mathbb{E}^{{\pi^*}^1,\pi^2}_{\psi}
[\hat{V}(\Psi_n)]-\hat{V}(\psi)}{n}\,.
\end{align*}Then by taking limit $n\rightarrow \infty$ we have using assumption (A2)
$$\gamma \leq \liminf_{n\rightarrow \infty}\frac{1}{n}\sum_{m=0}^{n-1}\mathbb{E}^{{\pi^*}^1,\pi^2}_{\psi}[\tilde{c}(\Psi_m,U_m,V_m)]\,.$$
Similarly, if $\{\pi^1_n\}$ is an arbitrary admissible strategy for player 1, then we have by \eqref{lb}
\begin{align*}\mathbb{E}^{\pi^1,{\pi^*}^2}_{\psi}[\underline{V}(\Psi_n)]+ \gamma &\geq \mathbb{E}^{\pi^1,{\pi^*}^2}_{\psi}[\tilde{c}(\Psi_n,U_n,V_n)]+\mathbb{E}^{\pi^1,{\pi^*}^2}_{\psi}[\underline{V}(\Psi_{n+1})],\,\,n\geq 0\,.
\end{align*}Therefore we have
\begin{align*}\gamma \geq \frac{1}{n}\sum_{m=0}^{n-1}\mathbb{E}^{\pi^1,{\pi^*}^2}_{\psi}[\tilde{c}(\Psi_m,U_m,V_m)]+\frac{\mathbb{E}^{\pi^1,{\pi^*}^2}_{\psi}
[\underline{V}(\Psi_n)]- \underline{V}(\psi)}{n}\,.
\end{align*}Then by taking limit $n\rightarrow \infty$ we have using assumption (A2)
$$\gamma \geq \liminf_{n\rightarrow \infty}\frac{1}{n}\sum_{m=0}^{n-1}\mathbb{E}^{\pi^1,{\pi^*}^2}_{\psi}[\tilde{c}(\Psi_m,U_m,V_m)]\,.$$
Now the conclusions follow.
\end{proof}Now the following theorem follows from Theorem \ref{thm1} and the equivalence of \textbf{COSG} and \textbf{POSG}.
\begin{thm}The \textbf{POSG} with average cost criterion has a value and is equal to $\gamma$ (as in \eqref{constant}) for any initial distribution. Moreover, $(\{{\pi_n^*}^1\},\{{\pi_n^*}^2\})$ given by Theorem $\ref{thm1}$ is a saddle point equilibrium.
\end{thm}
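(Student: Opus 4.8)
The plan is to leverage the exact identity between the \textbf{POSG} and \textbf{COSG} average payoffs recorded in \eqref{payoff}, together with the two one-sided bounds produced in the proof of Theorem \ref{thm1}. The point to emphasize first is that for every pair $(\pi^1,\pi^2)\in\Pi^1\times\Pi^2$ and every initial law $\psi$ the \textbf{POSG} payoff satisfies
$$V_{\pi^1,\pi^2}(\psi)=\liminf_{n\rightarrow\infty}\frac{1}{n}\sum_{m=0}^{n-1}\mathbb{E}_{\psi}^{\pi^1,\pi^2}[\tilde{c}(\Psi_m,U_m,V_m)],$$
which is precisely the \textbf{COSG} payoff evaluated along the filter process $\{\Psi_n\}$. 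Since both games are played with the same admissible strategy classes $\Pi^1,\Pi^2$, the upper and lower values $\inf_{\Pi^2}\sup_{\Pi^1}V_{\pi^1,\pi^2}(\psi)$ and $\sup_{\Pi^1}\inf_{\Pi^2}V_{\pi^1,\pi^2}(\psi)$ literally coincide for the two formulations, so it suffices to read off the \textbf{COSG} conclusions of Theorem \ref{thm1}.

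Next I would extract from the proof of Theorem \ref{thm1} the two inequalities, valid for the strategies $\{{\pi_n^*}^1\},\{{\pi_n^*}^2\}$ constructed there: for every $\pi^2\in\Pi^2$ one has $V_{{\pi^*}^1,\pi^2}(\psi)\geq\gamma$, and for every $\pi^1\in\Pi^1$ one has $V_{\pi^1,{\pi^*}^2}(\psi)\leq\gamma$. Taking the infimum over $\pi^2$ in the first bound and then the supremum over $\pi^1$ gives $\sup_{\Pi^1}\inf_{\Pi^2}V_{\pi^1,\pi^2}(\psi)\geq\gamma$; taking the supremum over $\pi^1$ in the second bound and then the infimum over $\pi^2$ gives $\inf_{\Pi^2}\sup_{\Pi^1}V_{\pi^1,\pi^2}(\psi)\leq\gamma$. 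Combining these with the trivial inequality $\sup_{\Pi^1}\inf_{\Pi^2}V\leq\inf_{\Pi^2}\sup_{\Pi^1}V$ yields the sandwich
$$\gamma\leq\sup_{\Pi^1}\inf_{\Pi^2}V_{\pi^1,\pi^2}(\psi)\leq\inf_{\Pi^2}\sup_{\Pi^1}V_{\pi^1,\pi^2}(\psi)\leq\gamma,$$
so the \textbf{POSG} has a value equal to $\gamma$. Because this argument uses an arbitrary initial $\psi\in\mathcal{P}_0(X)$ while $\gamma$ is the single constant fixed in \eqref{constant}, the value is the same for every initial distribution.

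Finally, for the saddle point claim I would simply match the bounds above against the optimality definitions given in Section 2. Since the value is $\gamma$, the relation $V_{{\pi^*}^1,\pi^2}(\psi)\geq\gamma=\inf_{\Pi^2}\sup_{\Pi^1}V_{\pi^1,\pi^2}(\psi)$ for all $\pi^2$ is exactly optimality of $\{{\pi_n^*}^1\}$ for player 1, and $V_{\pi^1,{\pi^*}^2}(\psi)\leq\gamma=\sup_{\Pi^1}\inf_{\Pi^2}V_{\pi^1,\pi^2}(\psi)$ for all $\pi^1$ is optimality of $\{{\pi_n^*}^2\}$ for player 2; hence the pair is a saddle point equilibrium. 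The computations here are routine, so the only genuine points requiring care are bookkeeping ones: checking that the stationary strategies ${\pi_n^*}^i(\cdot\,|\,i_n)=u^*(\psi_n)(\cdot)$ and $v^*(\psi_n)(\cdot)$ are genuinely admissible in the \textbf{POSG} (they are, being measurable functions of the filter $\Psi_n$, which is itself a measurable function of the information vector $i_n$ and $\psi$), and confirming that the payoff identity \eqref{payoff} is what transports the \textbf{COSG} optimality statements back to the \textbf{POSG}. I do not expect a substantive obstacle beyond these verifications, since the heavy analytic work is already discharged in Theorem \ref{thm1}.
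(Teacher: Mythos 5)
Your proposal is correct and takes essentially the same route as the paper: the paper's entire proof of this theorem is the single remark that it ``follows from Theorem \ref{thm1} and the equivalence of \textbf{COSG} and \textbf{POSG}'', and the two one-sided bounds you extract ($V_{{\pi^*}^1,\pi^2}(\psi)\geq\gamma$ for all $\pi^2$ and $V_{\pi^1,{\pi^*}^2}(\psi)\leq\gamma$ for all $\pi^1$) are precisely the inequalities established at the end of the proof of Theorem \ref{thm1}, transported to the \textbf{POSG} via the payoff identity \eqref{payoff}. You have simply written out the routine sandwich and admissibility checks that the author leaves implicit.
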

\section{Conclusion} In this article we study a partially observed stochastic game under average payoff criterion. We estimate the unobservable state variable and use the state estimate as our new observable state variable . We then use the vanishing discount approach to solve the average cost problem. Our analysis involves a coupling argument which uses the machinery of pseudo-atom construction. We show that the game has a value and also prove the existence of a saddle point equilibrium for our partially observable model.\\

\vspace{.8mm} \noindent \textbf{Acknowledgement.}The author wish
to thank V. S. Borkar and M. K. Ghosh for many helpful discussions
and comments.


\begin{thebibliography}{99}
\bibitem{athreya}Athreya, K. B., and Ney, P., \textit{A new approach to the limit theory of recurrent Markov chains}, Transactions of American Mathematical Society, Vol. 245, pp. 493-501, 1978.
\bibitem{shreve}Bertsekas, D. P., and Shreve, S. E., \textit{Stochastic Optimal Control}, Academic Press, New York, NY, 1978.
\bibitem{borkar}Borkar, V. S., \textit{Dynamic programming for ergodic cintrol with partial observations}, Stochastic Processes and their Applications, Vol. 103, pp. 293-310, 2003.
\bibitem{yushkevich}Dynkin, E. B., and Yushkevich, A., \textit{Controlled Markov Processes}, Springer Verlag, Berlin, Germany, 1979.
\bibitem{ghosh}Ghosh, M. K., McDonald, D., and Sinha, S., \textit{Zero-sum stochastic games with partial information}, Journal of Optimization Theory and Applications, Vol. 121, pp. 99-118, 2004.
\bibitem{lerma}Hern\'{a}ndez-Lerma, O., and Lasserre, J. B., \textit{Further Topics on Discrete-Time Markov control Processes}, Springer, New York, NY, 1999.
\bibitem{meyn}Meyn, S. P., and Tweedie, R. L., \textit{Markov Chains and Stochastic Stability}, Springer, London, 1993.
\bibitem{nummelin}Nummelin, E., \textit{A splitting technique for Harris recurrent chains}, Z. Wahrscheinlichkeitstheorie Verw. Geb., Vol. 43, pp. 309-318, 1978.
\bibitem{shapely}Shapley, L., \textit{Stochastic games}, Proceeding of National Academy of Sciences, Vol. 39, pp. 1095-1100, 1953.
\bibitem{vrieze}Vrieze, K., \textit{Zero-sum stochastic games: a survey}, CWI Quarterly, Vol. 2, pp. 147-170, 1989.
\end{thebibliography}
\end{document}